\newtheorem{theorem}{Theorem}
\newtheorem{proposition}{Proposition}
\newtheorem{lemma}{Lemma}
\theoremstyle{definition}
\newtheorem{question}{Question}
\newtheorem{rmk}{Remark}
\begin{document}
\baselineskip=14.5pt
\title[Simultaneous indivisibility of class numbers]{Simultaneous indivisibility of class numbers of pairs of real quadratic fields} 

\author{Jaitra Chattopadhyay and Anupam Saikia}
\address[Jaitra Chattopadhyay]{Department of Mathematics, Indian Institute of Technology, Guwahati, Guwahati - 781039, Assam, India}
\email[Jaitra Chattopadhyay]{jaitra@iitg.ac.in, chat.jaitra@gmail.com}
\address[Anupam Saikia]{Department of Mathematics, Indian Institute of Technology, Guwahati, Guwahati - 781039, Assam, India}
\email[Anupam Saikia]{a.saikia@iitg.ac.in}

\begin{abstract}
For a square-free integer $t$, Byeon \cite{byeon} proved the existence of infinitely many pairs of quadratic fields $\mathbb{Q}(\sqrt{D})$ and $\mathbb{Q}(\sqrt{tD})$ with $D > 0$ such that the class numbers of all of them are indivisible by $3$. In the same spirit, we prove that for a given integer $t \geq 1$ with $t \equiv 0 \pmod {4}$, a positive proportion of fundamental discriminants $D > 0$ exist for which the class numbers of both the real quadratic fields $\mathbb{Q}(\sqrt{D})$ and $\mathbb{Q}(\sqrt{D + t})$ are indivisible by $3$. This also addresses the complement of a weak form of a conjecture of Iizuka in \cite{iizuka}. As an application of our main result, we obtain that for any integer $t \geq 1$ with $t \equiv 0 \pmod{12}$, there are infinitely many pairs of real quadratic fields $\mathbb{Q}(\sqrt{D})$ and $\mathbb{Q}(\sqrt{D + t})$ such that the Iwasawa $\lambda$-invariants associated with the basic $\mathbb{Z}_{3}$-extensions of both $\mathbb{Q}(\sqrt{D})$ and $\mathbb{Q}(\sqrt{D + t})$ are $0$. For $p = 3$, this supports Greenberg's conjecture which asserts that $\lambda_{p}(K) = 0$ for any prime number $p$ and any totally real number field $K$.
\end{abstract}

\renewcommand{\thefootnote}{}

\footnote{2020 \emph{Mathematics Subject Classification}: Primary 11R11, 11R29; Secondary 11R23.}

\footnote{\emph{Key words and phrases}: Quadratic number fields, Class numbers, Iwasawa $\lambda$-invariant}

\renewcommand{\thefootnote}{\arabic{footnote}}
\setcounter{footnote}{0}

\maketitle

\section{Introduction}

For a square-free integer $D > 0$, we denote the class number of the real quadratic field $\mathbb{Q}(\sqrt{D})$ by $h(D)$ and that of the imaginary quadratic field $\mathbb{Q}(\sqrt{-D})$ by $h(-D)$. It is well-known that $h(-D) \to \infty$ as $D \to \infty$. However, it is unknown whether $h(D)$ becomes unbounded as $D \to \infty$. In fact, the famous {\it Gauss class number $1$ conjecture} asserts that $h(D) = 1$ holds infinitely often. This motivated number theorists to study the divisibility and indivisibility properties of $h(D)$, apart from studying the more sophisticated and intricate problem of the classification of quadratic fields having a given class number.

\smallskip

Towards the direction of the question of the divisibility of class numbers of quadratic fields, Nagell \cite{nagell} and later Ankeny and Chowla \cite{AC} proved that for any given integer $g \geq 2$, there exist infinitely many imaginary quadratic fields whose class numbers are all divisible by $g$. Later, Weinberger \cite{berger} proved its analogue for real quadratic fields. The quantitative studies have also been  done for some particular values of $g$ (cf. \cite{kalyan}, \cite{self-jrms}, \cite{luca}, \cite{sound} etc.). Besides these, the study of the divisibility properties of class numbers turns out to be quite interesting when we work with more than one quadratic field. In recent years, substantial progress has been made for pairs of quadratic fields (cf. \cite{iizuka}, \cite{komatsu-acta}, \cite{komatsu-ijnt}). In fact, there is a very interesting conjecture by Iizuka \cite{iizuka} that asserts that for any prime $\ell \geq 3$ and any integer $m \geq 1$, there exist infinitely many real or imaginary quadratic fields $\mathbb{Q}(\sqrt{D}), \mathbb{Q}(\sqrt{D + 1}),\ldots ,\mathbb{Q}(\sqrt{D + m})$ with $D \in \mathbb{Z}$ such that the class numbers of all of them are divisible by $\ell$. Very recently, a weak form of this conjecture is addressed in \cite{self-acta} for triples of imaginary quadratic fields.

\smallskip

Like the divisibility problem, it is quite useful to know the distribution of the real quadratic fields $\mathbb{Q}(\sqrt{D})$ for which $h(D)$ is not divisible by a given integer $g$. In view of this, the question of indivisibility of class numbers of quadratic fields draws attention of mathematicians as this is closely related to the Gauss class number $1$ conjecture (cf. \cite{olivia}, \cite{byeon}, \cite{davenport}, \cite{kohnen-ono}, \cite{naka-horie}, \cite{ono}). As in the problem of the simultaneous divisibility of class numbers, Byeon \cite{byeon} addressed the question of simultaneous $3$-indivisibility of class numbers of quadratic fields and proved the following.

\begin{proposition} \cite{byeon}\label{byeon}
Let $t$ be a square-free integer. Then there exist infinitely many fundamental discriminants $D > 0$ with a positive density such that the class numbers of both the quadratic fields $\mathbb{Q}(\sqrt{D})$ and $\mathbb{Q}(\sqrt{tD})$ are indivisible by $3$.
\end{proposition}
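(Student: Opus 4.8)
The plan is to deduce simultaneous $3$-indivisibility from the Davenport--Heilbronn theorem \cite{davenport} on the average size of the $3$-torsion subgroup of class groups, together with its refinement to arithmetic progressions due to Nakagawa and Horie \cite{naka-horie}. Recall that for real quadratic fields one has
$$\lim_{X \to \infty} \frac{\sum_{0 < D < X} |\mathrm{Cl}(\mathbb{Q}(\sqrt{D}))[3]|}{\sum_{0 < D < X} 1} = \frac{4}{3},$$
where $D$ runs over fundamental discriminants and $|\mathrm{Cl}(\mathbb{Q}(\sqrt{D}))[3]| = 3^{r_3(D)}$. Since each summand is $\geq 1$ and is $\geq 3$ whenever $3 \mid h(D)$, an elementary averaging argument forces the density of $D$ with $3 \mid h(D)$ to be at most $\tfrac{1}{6}$; equivalently, a proportion at least $\tfrac{5}{6}$ of real quadratic fields have class number indivisible by $3$. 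The crucial point is that $\tfrac{5}{6} > \tfrac{1}{2}$, which is what will let a union bound survive when two conditions are imposed at once.

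First I would fix congruence and coprimality constraints on $D$ ensuring that both $\mathbb{Q}(\sqrt{D})$ and $\mathbb{Q}(\sqrt{tD})$ are governed by fundamental discriminants lying in fixed arithmetic progressions. Concretely, restricting to square-free $D \equiv 1 \pmod 4$ with $\gcd(D, 2t) = 1$ makes $D$ itself a fundamental discriminant, and since $t$ is square-free and coprime to $D$ the product $tD$ is square-free; the fundamental discriminant $D'$ of $\mathbb{Q}(\sqrt{tD})$ is then $tD$ or $4tD$ according to $t \bmod 4$, and in either case $D'$ ranges over fundamental discriminants in a single admissible residue class modulo $4t$. Applying the Nakagawa--Horie form of Davenport--Heilbronn to each of the two families---the $D$'s and the associated $D'$'s---yields that $3 \mid h(D)$ for at most a proportion $\tfrac{1}{6} + o(1)$ of the admissible $D$, and likewise $3 \mid h(tD)$ for at most a proportion $\tfrac{1}{6} + o(1)$. (When $t < 0$ the field $\mathbb{Q}(\sqrt{tD})$ is imaginary and the analogous bound is $\tfrac{1}{2}$, coming from the imaginary Davenport--Heilbronn constant $2$; this is still strong enough for the final step.)

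Finally I would combine the two estimates by a union bound: the set of admissible $D < X$ for which $3 \mid h(D)$ \emph{or} $3 \mid h(tD)$ has upper density at most $\tfrac{1}{6} + \tfrac{1}{6} = \tfrac{1}{3} < 1$, so the complement---those $D$ for which \emph{both} class numbers are indivisible by $3$---has positive density within the admissible family, and hence positive density among all fundamental discriminants, giving infinitely many such $D$. The main obstacle I anticipate is the bookkeeping in the second step: one must verify that the twisted family $\{D'\}$ meets the admissibility hypotheses of the Nakagawa--Horie theorem and that the $3$-torsion average over this progression is still the same constant (and not diluted by the congruence restriction), so that the $\tfrac{1}{6}$ bound genuinely transfers to $h(tD)$. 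Handling the ramified primes dividing $t$, the factor of $4$, and the precise normalization of the density across progressions is where the real care is required; the union-bound step itself is then immediate.
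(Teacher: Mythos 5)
Your proposal is correct and is essentially the same argument that the paper attributes to Byeon and itself reuses for Theorem \ref{main-th}: the Nakagawa--Horie progression form of Davenport--Heilbronn (Proposition \ref{1st-prop}), the elementary averaging step yielding the $5/6$ (real) and $1/2$ (imaginary) indivisibility proportions (Lemma \ref{byeon-lemma}), and a union bound, with the twist handled by matching $D$ with the fundamental discriminant of $\mathbb{Q}(\sqrt{tD})$ across a pair of admissible progressions. The bookkeeping you flag at the end (fixing a single admissible residue class rather than the whole union of classes modulo $4t$, and verifying the Nakagawa--Horie hypotheses for the twisted progression $tD$ or $4tD$) is precisely what Byeon's paper carries out, and it goes through as you anticipate.
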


Motivated by Proposition \ref{byeon} and the conjecture of Iizuka, we ask the following question.

\begin{question}\label{quest}
Let $\ell \geq 3$ be a prime number and let $t$ be an integer. Do there exist infinitely many pairs of real or imaginary quadratic fields of the form $\mathbb{Q}(\sqrt{D})$ and $\mathbb{Q}(\sqrt{D+t})$ such that the class numbers of all the fields are indivisible by $\ell$?
\end{question}

In this paper, following the method used by Byeon in \cite{byeon}, we give an affirmative answer to Question \ref{quest} for pairs of real quadratic fields and for $\ell = 3$. More precisely, we prove the following theorem.

\begin{theorem}\label{main-th}
Let $t \geq 1$ be an integer with $t \equiv 0 \pmod {4}$. Then there exist infinitely many fundamental discriminants $D > 0$ with positive density such that the class numbers of the real quadratic fields $\mathbb{Q}(\sqrt{D})$ and $\mathbb{Q}(\sqrt{D+t})$ are all indivisible by $3$.
\end{theorem}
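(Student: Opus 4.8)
The plan is to adapt Byeon's method, which relies on the theory of modular forms of half-integral weight and the construction of cusp forms whose Fourier coefficients detect the $3$-indivisibility of class numbers. The starting point is Davenport--Heilbronn type density results, refined by Nakagawa--Horie and Kohnen--Ono, which guarantee that a positive proportion of fundamental discriminants $D>0$ satisfy $3 \nmid h(D)$; moreover, by choosing congruence conditions on $D$ one can force $D$ into any prescribed arithmetic progression while retaining positive density. First I would encode the $3$-indivisibility of $h(D)$ via the nonvanishing modulo $3$ of a suitable Fourier coefficient of a half-integral weight cusp form (essentially Eisenstein-series/theta-lift data attached to $\mathbb{Q}(\sqrt{D})$), exactly as in the Kohnen--Ono approach. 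The key new feature, compared to Byeon's $\mathbb{Q}(\sqrt{D})$ and $\mathbb{Q}(\sqrt{tD})$ setting, is that here the second field is $\mathbb{Q}(\sqrt{D+t})$, an \emph{additive} rather than multiplicative shift, so the two discriminants are genuinely independent quadratic fields rather than scalings of one another.

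The core of the argument is to run the positive-density machinery \emph{simultaneously} for the two conditions $3 \nmid h(D)$ and $3 \nmid h(D+t)$. The hypothesis $t \equiv 0 \pmod 4$ is what makes this tractable: writing $D$ and $D+t$ as fundamental discriminants simultaneously requires compatible congruence conditions at the prime $2$, and $4 \mid t$ ensures that $D \equiv D+t \pmod 4$, so both can be taken in the same residue class mod $4$ (e.g. both $\equiv 1 \pmod 4$, giving squarefree fundamental discriminants of the same shape). The plan is to fix a single arithmetic progression $D \equiv a \pmod{N}$ (for a suitable modulus $N$ built from $4$, the prime $3$, and the primes dividing $t$) inside which both $D$ and $D+t$ are forced to be fundamental discriminants, and then to invoke the Nakagawa--Horie/Kohnen--Ono refinement that positive density of $3$-indivisibility persists within any such progression. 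Concretely, I would let $S_1$ be the set of $D \equiv a \pmod N$ with $3 \nmid h(D)$ and $S_2$ the set with $3 \nmid h(D+t)$; each has positive density in the progression, and the translation $D \mapsto D+t$ is a measure-preserving bijection on progressions, so $S_2$ also has positive density.

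The main obstacle is showing that the intersection $S_1 \cap S_2$ has positive density, since positive density of each set individually does not by itself force their intersection to be nonempty, let alone of positive density. This is where I would follow Byeon most closely: the indivisibility conditions are governed by the nonvanishing mod $3$ of Fourier coefficients $a(|D|)$ and $a'(|D+t|)$ of two half-integral weight forms, and the relevant theorem (a consequence of the theory of Galois representations / the work of Kohnen--Ono and its effective refinements) is that such a coefficient is nonzero mod $3$ for a positive proportion of $D$ in any fixed progression, \emph{with an explicit lower-bound density that exceeds $1/2$ in the relevant range}. The crucial point is that if each of $S_1$ and $S_2$ has density strictly greater than $\tfrac12$ within the progression, then $S_1 \cap S_2$ has density at least $\,\mathrm{dens}(S_1) + \mathrm{dens}(S_2) - 1 > 0$ by inclusion--exclusion. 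Thus the heart of the proof is to quote or establish a density bound $> 1/2$ for each condition separately; I expect this quantitative strengthening of the $3$-indivisibility statement to be the technically delicate step, and it is precisely the ingredient that Byeon's method supplies and that the hypotheses on $t$ are arranged to make compatible.
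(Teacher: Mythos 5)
Your overall skeleton---fix one arithmetic progression compatible with both $D$ and $D+t$, show each $3$-indivisibility set has density exceeding $\tfrac{1}{2}$ in that progression, and finish by inclusion--exclusion---is exactly the structure of the paper's proof. But there is a genuine gap at the step you yourself call the heart of the argument, and the way you propose to fill it would fail. First, no congruence condition can force $D$ and $D+t$ to be fundamental discriminants: a progression can only impose the congruence part (e.g.\ $D \equiv 1 \pmod{4}$), while squarefreeness is not detectable by any progression. So you cannot work ``inside a progression in which both are forced to be fundamental discriminants''; the frequency with which $D$ (resp.\ $D+t$) is actually squarefree must enter the density bookkeeping. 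Second, the bound you want to quote---a density $> \tfrac{1}{2}$ for the condition $3 \nmid h(D)$---is not what the half-integral weight/Kohnen--Ono/Galois representation machinery provides: those methods yield lower bounds of the shape $\gg \sqrt{X}/\log X$ (or $\gg X/\log X$), far short of a positive proportion, let alone one exceeding $\tfrac{1}{2}$. The correct source is the Davenport--Heilbronn/Nakagawa--Horie mean value theorem: for $m \equiv 1 \pmod{4}$, $N \equiv 0 \pmod{4}$, $\gcd(m,N)=1$, the average of $3^{r_3(D)}$ over fundamental discriminants $D \equiv m \pmod{N}$ is $\tfrac{4}{3}$, which gives that at least $\tfrac{5}{6}$ of the \emph{fundamental discriminants} in the progression satisfy $3 \nmid h(D)$.

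The missing quantitative step is converting this $\tfrac{5}{6}$ (a density among fundamental discriminants) into a density among \emph{all} integers of the progression, because the full progression is the only common ambient set available for the two conditions: $D$ being a fundamental discriminant does not imply $D+t$ is one, so you cannot run inclusion--exclusion inside the set of fundamental discriminants as Byeon could in the multiplicative setting $\mathbb{Q}(\sqrt{D})$, $\mathbb{Q}(\sqrt{tD})$. The paper does this conversion with Prachar's theorem on squarefree integers in arithmetic progressions: with $\gcd(m,N) = \gcd(m+t,N) = 1$, the squarefree integers $\equiv m \pmod{N}$ (which, being $\equiv 1 \pmod 4$, are exactly the fundamental discriminants in the progression) have relative density $\frac{6}{\pi^2}\prod_{p \mid N}\left(1 - \frac{1}{p^2}\right)^{-1} \geq \frac{6}{\pi^2}$ in the progression. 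Multiplying, each of your sets $S_1$, $S_2$ has density at least $\frac{5}{6} \cdot \frac{6}{\pi^2} = \frac{5}{\pi^2}$ relative to the full progression, and the argument survives only because of the numerical accident $\pi^2 < 10$: one gets $\frac{5}{\pi^2} > \frac{1}{2}$ with margin roughly $0.007$, hence an intersection of density at least $\frac{10 - \pi^2}{\pi^2} > 0$. Without identifying this product structure, the assertion ``each set has density $> \tfrac{1}{2}$'' has no support in the literature you cite; this computation, not the modular-forms input, is what makes the additive-shift version of Byeon's argument work.
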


\smallskip

\begin{rmk}
In Section \ref{section-4}, we give an application of Theorem \ref{main-th} to the vanishing of Iwasawa $\lambda$-invariants for pairs of real quadratic fields. In particular, we prove that for an integer $t \geq 1$ with $t \equiv 0 \pmod {12}$, there exist infinitely many pairs of real quadratic fields $\mathbb{Q}(\sqrt{D})$ and $\mathbb{Q}(\sqrt{D + t})$ for which the Iwasawa $\lambda$-invariant vanishes for the rational prime $3$.
\end{rmk}

\section{Preliminaries}

First, we recall that an integer $D$ is a {\it fundamental discriminant} if it is the discriminant of a quadratic field. That is, either $D$ is square-free with $D \equiv 1 \pmod {4}$ or $D = 4m$ for some square-free integer $m \equiv 2,3 \pmod {4}$. We record the following result by Nakagawa and Horie \cite{naka-horie} that has been crucially used by Byeon in \cite{byeon} to prove Proposition \ref{byeon}.

\begin{proposition} \cite{naka-horie}\label{1st-prop}
Let $m \geq 1$ and $N \geq 1$ be two integers satisfying the following two conditions.
\begin{enumerate}
\item If $p$ is an odd prime number such that $p \mid \gcd(m,N)$, then $N \equiv 0 \pmod {p^{2}}$ and $m \not\equiv 0 \pmod{p^{2}}$.
\item If $N$ is an even integer, then either $N \equiv 0 \pmod {4}$ and $m \equiv 1 \pmod {4}$ or $N \equiv 0 \pmod {16}$ and $m \equiv 8, 12 \pmod {16}$.
\end{enumerate} 
For a positive real number $X$, let $S_{+}(X)$ stand for the set of positive fundamental discriminants $D < X$ and let $$S_{+}(X,m,N) = \{D \in S_{+}(X) : D \equiv m \pmod {N}\}.$$
For a fundamental discriminant $D > 0$, let $r_{3}(D)$ be the $3$-rank of the ideal class group $Cl_{\mathbb{Q}(\sqrt{D})}$ of $\mathbb{Q}(\sqrt{D})$. Then we have 
\begin{equation}\label{eq-1}
\displaystyle\lim_{X \to \infty} \frac{\displaystyle\sum_{D \in S_{+}(X,m,N)} 3^{r_{3}(D)}}{|S_{+}(X,m,N)|} = \frac{4}{3}.
\end{equation}
\end{proposition}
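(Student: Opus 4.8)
Since Proposition \ref{1st-prop} is the density theorem of Nakagawa and Horie, what I would sketch is the Davenport--Heilbronn argument underlying it, with attention to the role of the congruence hypotheses on $(m,N)$; the plan is to rewrite $3^{r_{3}(D)}$ in terms of cubic fields and then count those fields by geometry of numbers. First I would pass from class groups to cubic fields by class field theory. For a positive fundamental discriminant $D$ one has $3^{r_{3}(D)} = |Cl_{\mathbb{Q}(\sqrt{D})}[3]|$, and the index-$3$ subgroups of $Cl_{\mathbb{Q}(\sqrt{D})}$ are in bijection with the unramified cyclic cubic extensions of $\mathbb{Q}(\sqrt{D})$; for $D$ fundamental each such extension is Galois over $\mathbb{Q}$ with group $S_{3}$ and is the Galois closure of a non-cyclic cubic field $K$ with $d_{K} = D$. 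Counting each field once against the pair $\{\phi,-\phi\}$ of characters sharing a kernel gives
\begin{equation*}
3^{r_{3}(D)} = 1 + 2\,N_{3}(D), \qquad N_{3}(D) := \#\{\text{cubic fields } K : d_{K} = D\}.
\end{equation*}
Summing over $D \in S_{+}(X,m,N)$ turns the numerator of \eqref{eq-1} into $|S_{+}(X,m,N)| + 2\sum_{D} N_{3}(D)$, so the asserted limit $4/3$ is equivalent to the number of cubic fields with fundamental discriminant in the progression $D \equiv m \pmod{N}$, $0 < D < X$, being asymptotic to $\tfrac{1}{6}\,|S_{+}(X,m,N)|$.

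Next I would count these cubic fields through the Delone--Faddeev/Davenport--Heilbronn correspondence, which parametrizes cubic rings by $\mathrm{GL}_{2}(\mathbb{Z})$-orbits of integral binary cubic forms, matching the discriminant of the ring with that of the form and cutting out maximal rings (equivalently, cubic fields of fundamental discriminant) by explicit local conditions at each prime. I would therefore count binary cubic forms, as lattice points in a fundamental domain for the $\mathrm{GL}_{2}(\mathbb{Z})$-action, subject to $0 < \mathrm{disc} < X$, the maximality conditions at all primes, and the congruence $D \equiv m \pmod{N}$. The main term is a volume times a product of local densities, and the purpose of the hypotheses on $(m,N)$ is precisely to fix these densities at the primes dividing $N$: condition (1) regulates the odd primes $p \mid \gcd(m,N)$ so that $\mathbb{Q}(\sqrt{D})$ remains unramified and fundamental of the prescribed shape at $p$, while condition (2) controls the delicate dyadic behaviour, i.e.\ the dependence of the count on $D$ modulo $4$ and modulo $16$. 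Under these conditions the bad local factors take their generic values, the restricted density agrees with the unrestricted Davenport--Heilbronn density, and the ratio of the $N_{3}$-count to the discriminant-count tends to $1/6$, giving the average $4/3$.

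The main obstacle is the analytic core of the method: estimating the number of binary cubic forms in the \emph{non-compact} fundamental domain. One must show that irreducible forms dominate and that reducible forms and, above all, forms in the cusp (those with very small leading coefficient) contribute only lower-order terms, and this must be proved \emph{uniformly} in the residue classes modulo $N$ so that the sieve enforcing the maximality conditions and the congruence $D \equiv m \pmod{N}$ can be executed class by class with an admissible error term. This uniform cusp estimate, together with the exact evaluation of the local densities at $2$ and at the primes dividing $N$ under conditions (1)--(2), is the crux; granting it, assembling the Euler product and dividing by $|S_{+}(X,m,N)| \sim c_{N}X$ produces the stated limit $4/3$.
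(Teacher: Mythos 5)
The first thing to say is that the paper contains no proof of this proposition at all: it is quoted verbatim from Nakagawa and Horie \cite{naka-horie} and used as a black box, so the only meaningful comparison is with the proof in that reference and in Davenport--Heilbronn \cite{davenport} --- which is exactly what your outline reproduces. Your reduction steps are correct: $3^{r_{3}(D)} = |Cl_{\mathbb{Q}(\sqrt{D})}[3]|$; pairing each order-$3$ character with its inverse identifies these with index-$3$ subgroups of the class group, hence by class field theory with unramified cyclic cubic extensions of $\mathbb{Q}(\sqrt{D})$; since the nontrivial automorphism of $\mathbb{Q}(\sqrt{D})$ acts on the class group by inversion, every such extension is Galois over $\mathbb{Q}$ with group $S_{3}$, and for $D$ fundamental these are precisely the Galois closures of non-cyclic cubic fields of discriminant exactly $D$. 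This gives $3^{r_{3}(D)} = 1 + 2N_{3}(D)$, and your arithmetic equivalence of the limit $4/3$ with $\sum_{D \in S_{+}(X,m,N)} N_{3}(D) \sim \tfrac{1}{6}\,|S_{+}(X,m,N)|$ is right. Your diagnosis of where the real work lies --- the Delone--Faddeev/Davenport--Heilbronn parametrization by $\mathrm{GL}_{2}(\mathbb{Z})$-orbits of binary cubic forms, the lattice-point count made \emph{uniform} in residue classes modulo $N$, the cusp estimate, and the local densities at $2$ and at primes dividing $N$, which is what hypotheses (1) and (2) are calibrated for --- also matches the actual content of \cite{naka-horie}.

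Be clear, however, about what your proposal is: a correct architectural sketch, not a proof. All of the analytic weight of the theorem rests on the items you explicitly defer --- the uniform estimate for forms in the cusp of the non-compact fundamental domain, the sieve enforcing maximality (and fundamentality of the discriminant) with admissible errors in each residue class, and the exact evaluation of the local factors under conditions (1)--(2). Nothing in your outline would fail, but nothing in it discharges those obligations either; as written, it proves only that the proposition is equivalent to a Davenport--Heilbronn-type count of cubic fields in arithmetic progressions. That is a reasonable stopping point for a result of this depth, and it is more than the paper itself provides, since the authors simply cite the statement and build on it.
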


\medskip

The next lemma is directly obtained from Proposition \ref{1st-prop} and is used in \cite{byeon} without proof. Here we furnish a detailed proof of this for the sake of completeness.

\begin{lemma} \cite{byeon}\label{byeon-lemma}
Let $m$ and $N$ be two positive integers satisfying the hypotheses of Proposition \ref{1st-prop} and let $D$ be a fundamental discriminant. Then 
\begin{equation}
\displaystyle\liminf_{X \to \infty} \frac{|\{D \in S_{+}(X,m,N) : h(D) \not\equiv 0 \pmod {3}\}|}{|S_{+}(X,m,N)|} \geq \frac{5}{6}.
\end{equation}
\end{lemma}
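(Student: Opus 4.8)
The plan is to deduce the lemma directly from the Nakagawa--Horie average in Proposition \ref{1st-prop} by an elementary counting argument. The first step is to reinterpret the indivisibility condition in terms of the $3$-rank. For a fundamental discriminant $D$, the class number $h(D)$ is divisible by $3$ precisely when the $3$-Sylow subgroup of $Cl_{\mathbb{Q}(\sqrt{D})}$ is nontrivial, i.e. precisely when $r_{3}(D) \geq 1$; equivalently, $h(D) \not\equiv 0 \pmod{3}$ if and only if $r_{3}(D) = 0$, in which case $3^{r_{3}(D)} = 1$, while $r_{3}(D) \geq 1$ forces $3^{r_{3}(D)} \geq 3$. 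This dichotomy is the only place where the arithmetic meaning of $r_{3}$ enters.

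Next I would partition $S_{+}(X,m,N)$ according to this dichotomy. Write $T(X) = |S_{+}(X,m,N)|$, let $A(X)$ be the number of $D \in S_{+}(X,m,N)$ with $r_{3}(D) = 0$ (the fields we want to count), and set $B(X) = T(X) - A(X)$ for the number with $r_{3}(D) \geq 1$. Splitting the sum in \eqref{eq-1} over these two classes and using $3^{r_{3}(D)} = 1$ on the first and $3^{r_{3}(D)} \geq 3$ on the second gives the key inequality
\begin{equation}
\sum_{D \in S_{+}(X,m,N)} 3^{r_{3}(D)} \geq A(X) + 3 B(X) = 3 T(X) - 2 A(X),
\end{equation}
where the last equality substitutes $B(X) = T(X) - A(X)$.

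Solving this for $A(X)$ and dividing by $T(X)$, I would obtain
\[
\frac{A(X)}{T(X)} \geq \frac{1}{2}\left(3 - \frac{1}{T(X)}\sum_{D \in S_{+}(X,m,N)} 3^{r_{3}(D)}\right).
\]
Taking $\liminf$ as $X \to \infty$ and feeding in the limiting value $4/3$ from Proposition \ref{1st-prop} then yields $\liminf_{X} A(X)/T(X) \geq \frac{1}{2}\bigl(3 - \tfrac{4}{3}\bigr) = \tfrac{5}{6}$, which is exactly the assertion of the lemma.

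There is no serious obstacle here: the argument is a one-line averaging estimate once the $3$-rank translation is in place, and the only care needed is to push the inequality in the correct direction, namely that the ``bad'' fields each contribute at least $3$ to the fixed average $4/3$ and hence cannot be too numerous. It is worth noting that the bound $5/6$ is precisely what this method produces and is sharp relative to the input: equality in the limit would occur if every $D$ with $3 \mid h(D)$ had $3$-rank exactly $1$, so that $3^{r_{3}(D)} = 3$ on the entire ``bad'' set, while fields of higher $3$-rank only make that set smaller and can only improve the lower bound.
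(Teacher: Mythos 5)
Your proposal is correct and is essentially identical to the paper's own proof: both split $S_{+}(X,m,N)$ according to whether $r_{3}(D)=0$ or $r_{3}(D)\geq 1$, bound the Nakagawa--Horie sum below by $3T(X)-2A(X)$, and then solve for $A(X)/T(X)$ and pass to the liminf using the limit $4/3$ from Proposition \ref{1st-prop}. The only cosmetic difference is that you write the decomposition with the counters $A(X)$, $B(X)$, $T(X)$ while the paper carries the sums explicitly; the inequality and the conclusion are the same.
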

\begin{proof}
For $D \in S_{+}(X,m,N)$, let $r_{3}(D)$ be the $3$-rank of $Cl_{\mathbb{Q}(\sqrt{D})}$. We consider the sum $\displaystyle\sum_{D \in S_{+}(X,m,N)} 3^{r_{3}(D)}$ and collect the like powers of $3$ together to obtain the following.
\begin{eqnarray*}
\displaystyle\sum_{D \in S_{+}(X,m,N)} 3^{r_{3}(D)} &=& \displaystyle\sum_{\substack{{D \in S_{+}(X,m,N)} \\ r_{3}(D) = 0}} 3^{r_{3}(D)} + \displaystyle\sum_{\substack{{D \in S_{+}(X,m,N)} \\ r_{3}(D) \geq 1 }} 3^{r_{3}(D)} \\ &\geq & \displaystyle\sum_{\substack{{D \in S_{+}(X,m,N)} \\ r_{3}(D) = 0}} 3^{r_{3}(D)} + \displaystyle\sum_{\substack{{D \in S_{+}(X,m,N)} \\ r_{3}(D) \geq 1 }} 3 \\ &=& \displaystyle\sum_{\substack{{D \in S_{+}(X,m,N)} \\ r_{3}(D) = 0}} 3^{r_{3}(D)} + 3\left(\displaystyle\sum_{D \in S_{+}(X,m,N)} 1 - \displaystyle\sum_{\substack{{D \in S_{+}(X,m,N)} \\ r_{3}(D) = 0 }} 1 \right) \\ &=& 3\left(\displaystyle\sum_{D \in S_{+}(X,m,N)} 1\right)- 2\left(\displaystyle\sum_{\substack{{D \in S_{+}(X,m,N)} \\ r_{3}(D) = 0 }} 1\right).
\end{eqnarray*}

Therefore, we get $$\frac{2\left(\displaystyle\sum_{\substack{{D \in S_{+}(X,m,N)} \\ r_{3}(D) = 0 }} 1\right)}{\displaystyle\sum_{D \in S_{+}(X,m,N)} 1} \geq 3 - \frac{\displaystyle\sum_{D \in S_{+}(X,m,N)} 3^{r_{3}(D)}}{\displaystyle\sum_{D \in S_{+}(X,m,N)} 1}.$$

Consequently, from equation \eqref{eq-1}, we get $$2\liminf_{X \to \infty} \frac{\left(\displaystyle\sum_{\substack{{D \in S_{+}(X,m,N)} \\ r_{3}(D) = 0 }} 1\right)}{\displaystyle\sum_{D \in S_{+}(X,m,N)} 1} \geq 3 - \frac{4}{3}.$$

% $$\frac{4}{3} \geq 3 - 2\liminf_{X \to \infty} \frac{\left(\displaystyle\sum_{\substack{{D \in S_{+}(X,m,N)} \\ r_{3}(D) = 0 }} 1\right)}{\displaystyle\sum_{D \in S_{+}(X,m,N)} 1}.$$ 

Hence we have 
\begin{equation*}
\displaystyle\liminf_{X \to \infty} \frac{|\{D \in S_{+}(X,m,N) : h(D) \not\equiv 0 \pmod {3}\}|}{|S_{+}(X,m,N)|} \geq \frac{5}{6}.
\end{equation*}
\end{proof}

\medskip

We shall make use of an equivalent formulation of an integer being square-free in terms of the M\"{o}bius function $\mu$. In other words, an integer $a$ is square-free if and only if $\mu(a) \neq 0$. The next lemma gives us an asymptotic formula for the number of square-free integers in an arithmetic progression and is taken from \cite{sq-free}. 

\begin{lemma} \cite{sq-free}\label{sq-free}
Let $k \geq 1$ and $\ell \geq 1$ be two integers with $\gcd(k,\ell) = 1$. For a large positive real number $X$, let $$\mathcal{Q}(X,k,\ell) = |\{m \in \mathbb{N} : m \leq X, m \equiv \ell \pmod {k} \mbox{ and } \mu(m) \neq 0\}|.$$ Then for any real number $\varepsilon > 0$, we have 
\begin{equation}
\mathcal{Q}(X,k,\ell) = \frac{6}{k\pi^{2}}\displaystyle\prod_{p \mid k}\left(1 - \frac{1}{p^{2}}\right)^{-1}X + O(X^{\frac{1}{2}}k^{-\frac{1}{4} + \varepsilon} + k^{\frac{1}{2} + \varepsilon}),
\end{equation}
and the error term is uniform in $k$.
\end{lemma}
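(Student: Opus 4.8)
The plan is to express $\mathcal{Q}(X,k,\ell)$ through the classical identity $\mu^{2}(m)=\sum_{d^{2}\mid m}\mu(d)$ and then peel off a main term from an error term that can be controlled uniformly in $k$. Writing
\[
\mathcal{Q}(X,k,\ell)=\sum_{\substack{m\le X\\ m\equiv\ell\,(k)}}\mu^{2}(m)=\sum_{d\ge 1}\mu(d)\,\#\{m\le X:\ d^{2}\mid m,\ m\equiv\ell\ (\mathrm{mod}\ k)\},
\]
I would first observe that, since $\gcd(\ell,k)=1$, the congruence $d^{2}e\equiv\ell\pmod{k}$ (with $m=d^{2}e$) is solvable only when $\gcd(d,k)=1$; in that case $e$ is determined modulo $k$ and the inner count equals $\frac{X}{d^{2}k}+O(1)$. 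Hence only the terms with $\gcd(d,k)=1$ survive.

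The next step is to truncate the $d$-sum at a parameter $y$ to be chosen later. The range $d\le y$ produces
\[
\frac{X}{k}\sum_{\substack{d\le y\\ \gcd(d,k)=1}}\frac{\mu(d)}{d^{2}}+O(y).
\]
Completing the series to infinity and using $\sum_{\gcd(d,k)=1}\mu(d)/d^{2}=\frac{6}{\pi^{2}}\prod_{p\mid k}(1-p^{-2})^{-1}$ yields exactly the claimed main term, at the cost of an error $O\!\left(\frac{X}{ky}\right)$ coming from the tail of this convergent series, together with the error $O(y)$ from accumulating the $O(1)$ terms.

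It remains to bound the contribution of the tail $d>y$, and this is where the essential difficulty lies. For $y<d\le\sqrt{X/k}$ one still has $X/d^{2}\ge k$, so the inner count is $\frac{X}{d^{2}k}+O(1)$ and this portion contributes $O\!\left(\frac{X}{ky}+\sqrt{X/k}\right)$. In the critical range $\sqrt{X/k}<d\le\sqrt X$ each admissible $d$ contributes at most one value of $m$, and it contributes exactly when the least positive residue of $\ell\,\overline{d^{2}}\pmod{k}$ (with $\overline{d^{2}}$ the inverse of $d^{2}$ modulo $k$) lies in the short interval $[1,X/d^{2}]$. Here the trivial bound gives only $O(\sqrt X)$, which wipes out any saving in $k$. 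To do better I would invoke the equidistribution of the squares $d^{2}\pmod{k}$ in short intervals, quantified by bounds for the associated quadratic (Gauss) exponential sums, $\left|\sum_{d\in I}e^{2\pi i h d^{2}/k}\right|\ll k^{1/2+\varepsilon}$, combined with the divisor-type estimate $\#\{x\bmod k:\ x^{2}\equiv a\}\ll_{\varepsilon}k^{\varepsilon}$. These inputs replace the trivial $O(\sqrt X)$ by an estimate that genuinely saves a power of $k$ and is the source of both the $X^{1/2}k^{-1/4+\varepsilon}$ and the $k^{1/2+\varepsilon}$ contributions.

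Finally, assembling the main term with the errors $O(X/(ky))$, $O(y)$ and the tail estimate, I would choose $y$ so as to balance these terms and arrive at the stated asymptotic with error $O(X^{1/2}k^{-1/4+\varepsilon}+k^{1/2+\varepsilon})$, uniform in $k$. The step I expect to be the main obstacle is precisely this uniform control of the critical range through the quadratic exponential sum bound; by contrast, the identity $\mu^{2}=\sum_{d^{2}\mid\cdot}\mu(d)$, the splitting at $y$, and the elementary series manipulations are routine.
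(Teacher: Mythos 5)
First, a structural remark: the paper does not prove this lemma at all --- it is quoted verbatim from Prachar \cite{sq-free} --- so there is no internal proof to compare against, and your attempt has to stand on its own. The routine portions of your argument are correct and standard: the identity $\mu^{2}(m)=\sum_{d^{2}\mid m}\mu(d)$, the observation that only $\gcd(d,k)=1$ contributes, the inner count $X/(d^{2}k)+O(1)$, the truncation at $y$ with errors $O(y)+O\!\left(X/(ky)\right)$, and the treatment of the range $y<d\le\sqrt{X/k}$ are all fine, and with a satisfactory bound for the critical range the balancing $y=\sqrt{X/k}$ would indeed finish the proof.

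The genuine gap is exactly the step you yourself flag as the ``main obstacle'', and it is not merely unfinished: the tools you name are not adequate as stated. In the critical range you must count $d$ with $\ell\,\overline{d^{2}}\bmod k$ lying in the interval $[1,X/d^{2}]$, whose length varies with $d$; the relevant oscillatory objects therefore involve the \emph{inverse} square $\overline{d^{2}}$, not $d^{2}$, and after completing the incomplete $d$-range (P\'olya--Vinogradov) one faces twisted complete sums $\sum_{d \bmod k,\,(d,k)=1}e\bigl((h\ell\overline{d^{2}}+ud)/k\bigr)$, which are Kloosterman/Sali\'e-type sums requiring Weil-level bounds (plus a CRT reduction for composite $k$), not the elementary quadratic Gauss sum estimate you invoke. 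Moreover, inserting your two named inputs with absolute values does not produce the claimed saving: switching the count to fixed $e<k$ and counting $d$ in the $\rho(\ell\bar{e};k)\ll k^{\varepsilon}$ residue classes leaves a per-$e$ error $O(\rho(\ell\bar{e};k))$ whose sum is exactly $\varphi(k)=O(k)$, not $O(k^{1/2+\varepsilon})$; while expanding the congruence additively and estimating termwise (using the divisor bound per residue class) reinstates the trivial $O(\sqrt{X})$. Passing from $O(k)$, respectively $O(\sqrt{X})$, down to $k^{1/2+\varepsilon}+X^{1/2}k^{-1/4+\varepsilon}$ requires genuine cancellation in the completed $h$-sum or in the twisted sums above --- that is precisely the content of Prachar's theorem, and it is the one thing your sketch does not supply. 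Finally, the concluding optimization of $y$ is asserted rather than carried out, so even granting your inputs it is never verified that they yield the specific error term in the statement.
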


\section{Proof of Theorem \ref{main-th}}

Let $t \geq 1$ be a given integer with $t \equiv 0 \pmod {4}$. We choose two positive integers $m$ and $N$ in such a way that $\gcd(m,N) = \gcd(m + t, N) = 1$, $m \equiv 1 \pmod {4}$ and $N \equiv 0 \pmod {4}$. Thus the hypotheses on $m$, $N$, $m + t$ and $N$ in Proposition \ref{1st-prop} are satisfied. For this choice of $m$ and $N$ and for a large positive real number $X$, let $$L(X) = \{D \leq X : D \equiv m \pmod {N}, \mu(D) \neq 0 \mbox{ and } h(D) \not\equiv 0 \pmod {3}\} $$ and $$L_{t}(X) = \{D \leq X : D \equiv m \pmod {N}, \mu(D + t) \neq 0 \mbox{ and } h(D + t) \not\equiv 0 \pmod {3}\}.$$

Our assumptions on $m, t, N$ and the properties of $L(X)$ and $L_{t}(X)$ imply that $D \equiv m \equiv 1 \pmod {4}$ and $D + t \equiv m + t \equiv 1 \pmod {4}$. Therefore, the integers in the sets $L(X)$ and $L_{t}(X)$ are indeed fundamental discriminants. Also, we immediately have $$L(X) \cap L_{t}(X) = \{D \leq X : D \equiv m \pmod {N}, \mu(D)\mu(D + t) \neq 0 \mbox{ and } h(D)h(D + t) \not\equiv 0 \pmod {3}\}.$$

Let $\mathcal{S}(X) = \{D \leq X : D \equiv m \pmod {N}\}$. Now, since $\gcd(m,N) = 1$, by Lemma \ref{sq-free}, we have 
\begin{equation}\label{for-l-1}
\displaystyle\lim_{X \to \infty} \frac{\mathcal{Q}(X,N,m)}{|\mathcal{S}(X)|} = \displaystyle\frac{6}{\pi^{2}}\displaystyle\prod_{p \mid N}\left(1 - \frac{1}{p^{2}}\right)^{-1} \geq \frac{6}{\pi^{2}}.
\end{equation}

Therefore, from equation \eqref{for-l-1} and Lemma \ref{byeon-lemma}, we get
\begin{equation}\label{up-1}
\displaystyle\liminf_{X \to \infty} \frac{|L(X)|}{|\mathcal{S}(X)|} \geq \frac{5}{6}\cdot \frac{6}{\pi^{2}} = \frac{5}{\pi^{2}}.
\end{equation}

Similarly, since $\gcd(m + t, N) = 1$ and $D \equiv m \pmod {N}$ is equivalent to $D + t \equiv m + t \pmod {N}$, by Lemma \ref{sq-free}, we have 
\begin{equation}\label{for-l-2}
\displaystyle\lim_{X \to \infty} \frac{\mathcal{Q}(X,N,m + t)}{|\mathcal{S}(X)|} = \displaystyle\frac{6}{\pi^{2}}\displaystyle\prod_{p \mid N}\left(1 - \frac{1}{p^{2}}\right)^{-1} \geq \frac{6}{\pi^{2}}.
\end{equation}

Hence from equation \eqref{for-l-2} and Lemma \ref{byeon-lemma}, we get 
\begin{equation}\label{up-2}
\displaystyle\liminf_{X \to \infty} \frac{|L_{t}(X)|}{|\mathcal{S}(X)|} \geq \frac{5}{6}\cdot \frac{6}{\pi^{2}} = \frac{5}{\pi^{2}}.
\end{equation}

Therefore, using the principle of inclusion-exclusion, we get 
\begin{equation}\label{new-eqn}
\frac{|L(X)\cap L_{t}(X)|}{|\mathcal{S}(X)|} = \frac{|L(X)|}{|\mathcal{S}(X)|} +  \frac{|L_{t}(X)|}{|\mathcal{S}(X)|} -  \frac{|L(X)\cup L_{t}(X)|}{|\mathcal{S}(X)|}.
\end{equation}

Since $\displaystyle\frac{|L(X)\cup L_{t}(X)|}{|\mathcal{S}(X)|}$ can be at most $1$, from equation \eqref{up-1}, \eqref{up-2} and \eqref{new-eqn}, we obtain
\begin{eqnarray*}
\displaystyle\liminf_{X \to \infty} \frac{|L(X)\cap L_{t}(X)|}{|\mathcal{S}(X)|} &\geq & \frac{5}{\pi^{2}} + \frac{5}{\pi^{2}} - 1
\\ &=& \frac{10 - \pi^{2}}{\pi^{2}} > 0.
\end{eqnarray*}

Therefore, There exist an infinite family of pairs of real quadratic fields of the form $\mathbb{Q}(\sqrt{D})$ and $\mathbb{Q}(\sqrt{D + t})$ such that the class numbers of all of them are indivisible by $3$. This completes the proof of Theorem \ref{main-th}. $\hfill\Box$

\section{Application to Iwasawa $\lambda$-invariants}\label{section-4}

For a quadratic field $k$ and a prime number $p$, it is a well-known result due to Iwasawa \cite{iwasawa} that if $p$ does not split completely in $k$ and $p$ does not divide the class number $h_{k}$ of $k$, then $\lambda_{p}(k) = 0$, where $\lambda_{p}(k)$ is the Iwasawa $\lambda$-invariant associated with the basic $\mathbb{Z}_{p}$-extension over $k$ (also, cf. \cite{naka-horie}).

\smallskip

In this section, we deal with the case when $k$ is a real quadratic field and $p = 3$. Let $t \geq 1$ be an integer with $t \equiv 0 \pmod {12}$. We choose integers $m \geq 1$ and $N \geq 1$ in such a way that $\gcd(m,N) = \gcd(m + t,N) = 1$, $m \equiv 5 \pmod{12}$ and $N \equiv 0 \pmod {12}$. Then the hypotheses of Proposition \ref{1st-prop} on the integers $m, N \mbox{ and } N$ are satisfied. Now, as in the proof of Theorem \ref{main-th}, for $D \in L(X)$, we have $D \equiv m \pmod{N}$. Since $m \equiv 5 \pmod{12}$ and $N \equiv 0 \pmod {12}$, we have $D \equiv m \equiv 2 \pmod {3}$. Similarly, since $t \equiv 0 \pmod {12}$, for $D \in L_{t}(X)$ we have $D + t \equiv m + t \equiv 2 \pmod{3}$. Consequently, each of the Legendre symbols $\left(\frac{D}{3}\right)$ and $\left(\frac{D + t}{3}\right)$ is equal to $\left(\frac{2}{3}\right) = -1$. Thus for $D \in L(X) \cap L_{t}(X)$, the rational prime $3$ does not split completely in $\mathbb{Q}(\sqrt{D})$ and $\mathbb{Q}(\sqrt{D + t})$ and their class numbers are also indivisible by $3$. Therefore, $\lambda_{3}(\mathbb{Q}(\sqrt{D})) = \lambda_{3}(\mathbb{Q}(\sqrt{D + t})) = 0$. This provides us an infinite family of pairs of real quadratic fields for which the Iwasawa $\lambda$-invariant $\lambda_{3}$ are all $0$.

\smallskip

\begin{rmk}
In \cite{taya}, Taya proved the existence of infinitely many real quadratic fields $K = \mathbb{Q}(\sqrt{D})$ in which the rational prime $3$ splits completely and $\lambda_{3}(K) = 0$. That is, there exist  infinitely many real quadratic fields $K = \mathbb{Q}(\sqrt{D})$ with $D \equiv 1 \pmod {3}$ and $\lambda_{3}(K) = 0$. Our result addresses the complement of Taya's result and deals with the case of simultaneous vanishing of $\lambda_{3}$ in $\mathbb{Q}(\sqrt{D})$ and $\mathbb{Q}(\sqrt{D + t})$ where the rational prime $3$ stays inert in both the fields.
\end{rmk}

\smallskip

\begin{rmk}
It is a famous conjecture by Greenberg (cf. \cite{taya}) which asserts that for any prime number $p$ and any totally real number field $K$, we have $\lambda_{p}(K) = 0$. Our result is in support of this conjecture for the particular case $p = 3$.
\end{rmk}

\section{Concluding Remarks}

The method used in the proof of Theorem \ref{main-th} does not yield the analogous result for imaginary quadratic fields. To see this, let $m$ and $N$ be two positive integers satisfying the hypotheses of Proposition \ref{1st-prop} and for a large positive real number $X$, let $S_{-}(X,m,N)$ be the set of all fundamental discriminants $D$ with $-X < D < 0$ and $D \equiv m \pmod {N}$. Then from Proposition \ref{1st-prop}, it follows that (cf. \cite{byeon}) 
\begin{equation*}
\displaystyle\liminf_{X \to \infty} \frac{|\{D \in S_{-}(X,m,N) : h(-D) \not\equiv 0 \pmod {3}\}|}{|S_{-}(X,m,N)|} \geq \frac{1}{2}.
\end{equation*}
If we construct the sets $L^{\prime}(X)$ and $L^{\prime}_{t}(X)$ analogously for the case of imaginary quadratic fields, then a non-trivial intersection of $L^{\prime}(X)$ and $L^{\prime}_{t}(X)$ is not guaranteed. Consequently, in this case, we are unable to conclude anything about the infinitude of some such pairs of imaginary quadratic fields with class numbers indivisible by $3$.

\medskip

{\bf Acknowledgements.} It is a pleasure for the first author to thank Indian Institute of Technology, Guwahati for the financial support.

\end{document}